\theoremstyle{plain}
\newtheorem{thm}{Theorem}[section]
\newtheorem{prop}[thm]{Proposition}
\newtheorem{lem}[thm]{Lemma}
\newtheorem{cor}[thm]{Corollary}
\theoremstyle{definition}
\newtheorem{dfn}[thm]{Definition}
\newcommand{\bC}{\mathbb{C}}
\newcommand{\bF}{\mathbb{F}}
\newcommand{\bZ}{\mathbb{Z}}
\newcommand{\frn}{\mathfrak{n}}
\renewcommand{\p@enumii}{}
\begin{document}

\title[Dimension variation for Drinfeld cuspforms of level $\Gamma_1(t)$]{Dimension variation of Gouv\^{e}a-Mazur type 
for Drinfeld cuspforms of level $\Gamma_1(t)$}
\author{Shin Hattori}
\address[Shin Hattori]{Department of Natural Sciences, Tokyo City University}

\date{\today}


\begin{abstract}
Let $p$ be a rational prime and $q>1$ a $p$-power. Let $S_k(\Gamma_1(t))$ be the space of Drinfeld cuspforms of level $\Gamma_1(t)$ and weight $k$ for $\bF_q[t]$. For any non-negative rational number $\alpha$, we denote by $d(k,\alpha)$ the dimension of the slope $\alpha$ generalized eigenspace for the $U$-operator acting on $S_k(\Gamma_1(t))$. In this paper, we prove a function field analogue of the Gouv\^{e}a-Mazur conjecture for this setting. Namely, we show that for any $\alpha\leq m$ and $k_1,k_2>\alpha+1$, if $k_1\equiv k_2 \bmod p^m$, then $d(k_1,\alpha)=d(k_2,\alpha)$.
\end{abstract}

\maketitle



\section{Introduction}

Let $p$ be a rational prime, $q>1$ a $p$-power, $A=\bF_q[t]$ and $\wp\in A$ a monic irreducible polynomial. For 
$K_\infty=\bF_q((1/t))$, we denote by $\bC_\infty$ the $(1/t)$-adic completion of an algebraic closure of $K_\infty$. 
Then the Drinfeld upper half plane $\Omega=\bC_\infty\setminus K_\infty$ has a natural structure of a rigid analytic 
variety over 
$K_\infty$. 

Let $k$ be an integer and $\Gamma$ a subgroup of $\mathit{SL}_2(A)$. Then a Drinfeld modular form of level 
$\Gamma$ and weight $k$ is a rigid analytic function $f:\Omega\to \bC_\infty$ satisfying
\[
f\left(\frac{az+b}{cz+d} \right)=(cz+d)^kf(z)\text{ for any }z\in \Omega,\ \begin{pmatrix}
a&b\\c&d
\end{pmatrix} \in \Gamma
\]
and a holomorphy condition at cusps.
The notion of Drinfeld modular form can be considered as a function field analogue of that of elliptic modular form and 
the former often has properties which are parallel to the latter. However, despite that the theory of $p$-adic families 
of elliptic modular forms is highly developed and has been yielding many applications, $\wp$-adic properties of 
Drinfeld modular forms are not well-understood yet. A typical difficulty in the Drinfeld case seems that a na\"{i}ve 
analogue of the universal character $\bZ_p^\times\to \bZ_p[[\bZ_p^\times]]^\times$ is not locally analytic by 
\cite[Lemma 
2.5]{Jeong} and thus similar constructions to those in the classical case including \cite{AIP-SMF} will not immediately produce an analytic family of invertible sheaves interpolating automorphic line bundles.

Still, there seem to exist interesting structures in $\wp$-adic properties of Drinfeld modular forms. In \cite{BV1, 
BV0}, Bandini-Valentino studied an analogue of the classical Atkin $U$-operator, which we also denote by $U$, acting on 
the space $S_k(\Gamma_1(t))$ of Drinfeld cuspforms of level $\Gamma_1(t)$ and weight $k$. The operator $U$ is defined by
\begin{equation}\label{EqnDefU}
(Uf)(z)=\frac{1}{t}\sum_{\beta\in \bF_q}f\left(\frac{z+\beta}{t}\right).
\end{equation}
The normalized $t$-adic valuation of an eigenvalue of $U$ is called slope. Note that here we adopt the different 
normalization from that of Bandini-Valentino, and as a result our notion of slope is smaller than theirs by one.
For a non-negative rational number $\alpha$, we denote by $d(k,\alpha)$ the dimension of the generalized eigenspace of 
$U$ acting on $S_k(\Gamma_1(t))$ for the eigenvalues of slope $\alpha$. Then they proposed a conjecture on a $p$-adic 
variation of $d(k,\alpha)$ with respect to $k$ \cite[Conjecture 6.1]{BV0} which can be regarded as a function field 
analogue of the Gouv\^{e}a-Mazur conjecture \cite[Conjecture 1]{GM}. In this paper, we will prove it.

\begin{thm}(Theorem \ref{Main})\label{MainIntro}
	Let $m\geq 0$ be an integer and $\alpha$ a non-negative rational number. Suppose $\alpha\leq m$. Then the dimension $d(k,\alpha)$ of the slope $\alpha$ generalized eigenspace in $S_k(\Gamma_1(t))$ satisfies
	\[
	k_1,k_2> \alpha+1,\ k_1\equiv k_2\bmod p^m\Rightarrow d(k_1,\alpha)=d(k_2,\alpha).
	\]  
	\end{thm}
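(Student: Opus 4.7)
The plan is to establish the theorem by comparing the characteristic polynomials of $U$ on $S_{k_1}(\Gamma_1(t))$ and $S_{k_2}(\Gamma_1(t))$ via explicit matrix computations, in the spirit of the Bandini--Valentino presentations cited in the introduction, and then extracting the equality of slope-$\alpha$ dimensions via a Newton polygon stability argument.

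First, I would choose a basis of $S_k(\Gamma_1(t))$ adapted to the action of $U$ --- for instance, via the harmonic cocycle description of Drinfeld cuspforms on the Bruhat--Tits tree of $GL_2(K_\infty)$ with coefficients in the $(k-2)$-nd symmetric power representation --- so that the matrix $A_k$ of $U$ has entries given by explicit combinatorial expressions in $k$ (binomial coefficients times powers of $t$, arising from the expansion of $(cz+d)^k$ in \eqref{EqnDefU}). These expressions are $p$-adically continuous in $k$, so that a congruence $k_1 \equiv k_2 \bmod p^m$ should produce, entry by entry, a congruence of the form $A_{k_1} \equiv A_{k_2}$ modulo a suitable power of $t$ (heuristically $t^{m+1}$ times an entry-dependent geometric factor reflecting the slope stratification on the basis). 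Forming the characteristic polynomial $P_k(X) = \det(X \cdot \mathrm{Id} - A_k)$ and tracking the $t$-adic valuations of its coefficients, I would show $P_{k_1}$ and $P_{k_2}$ have the same Newton polygon up to slope $m$.

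Given this, a Koike-type stability lemma (if two monic polynomials agree modulo $t^{N+1}$ in every coefficient contributing to the Newton polygon below slope $\alpha$, then their numbers of roots of slope $\leq \alpha$ coincide) would immediately yield $d(k_1,\alpha) = d(k_2,\alpha)$ for all $\alpha \leq m$. The bound $\alpha \leq m$ matches exactly the depth of the $p$-adic congruence on $k$, via the translation between $p$-adic continuity in $k$ and $t$-adic precision of matrix entries, with no slack to spare.

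The main obstacle I anticipate is that $\dim S_{k_1}(\Gamma_1(t))$ and $\dim S_{k_2}(\Gamma_1(t))$ differ in general (they grow linearly with $k$), so the matrices $A_{k_1}$ and $A_{k_2}$ have different sizes and cannot be compared naively. To resolve this, I would embed both into a larger $t$-adic Banach $A$-module on which $U$ acts completely continuously, making a characteristic power series $\det(1 - XU)$ well-defined independently of the weight, and compare those instead. The hypothesis $k_i > \alpha + 1$ should then ensure that all slope-$\leq \alpha$ eigenvectors on this ambient Banach space actually lie in the finite-dimensional cuspidal subspace (i.e.~the "extra" Banach directions contribute only large slopes). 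Making this last step rigorous in the Drinfeld setting --- where, as the introduction emphasizes, the classical overconvergent framework is not directly available --- is where the most delicate work will lie, and will likely require a careful $t$-adic analysis of how $U$ propagates through a naive completion of the algebraic span of harmonic cocycles, together with a precise accounting of the $p$-adic behaviour of the matrix entries of $U$ in the chosen basis.
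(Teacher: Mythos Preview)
Your outline captures the broad shape—explicit matrices for $U$, a congruence between characteristic polynomials when $k_1\equiv k_2\bmod p^m$, and a Newton-polygon comparison—but it misses the ingredient that makes the \emph{linear} bound $\alpha\leq m$ work, and as written the plan would fail for $p=2$. The entry-by-entry congruence you sketch (realistically $\bmod\ t^{p^m}$, via Lucas-type congruences on the binomial coefficients in the Bandini--Valentino formula) only yields $P^{(k_1)}(X)\equiv P^{(k_2)}(X)\bmod t^{p^m}$ up to a high-slope factor. As the introduction already flags, this is exactly the Wan-type estimate, and it is not enough: to pin down the slope-$\alpha$ segment you must control $a_n$ at its right endpoint, where $v_t(a_n)$ can be of order $m(n-1)$, and for $p=2$ this can exceed $p^m$. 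Your proposed Banach-module embedding is precisely the Wan framework and produces the same precision, so it does not close the gap; nor does it sidestep the size mismatch any more cheaply than the direct block comparison the paper uses.

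What the paper actually exploits is a structural feature of the Bandini--Valentino matrix that your phrase ``entry-dependent geometric factor'' only gestures at: every entry in column $j$ of $U^{(k)}$ lies in $\bF_q t^j$ (the paper calls such matrices \emph{glissando}). This has two consequences you are missing. First, it gives an elementary block-triangular comparison $U^{(k+p^m)}\equiv\left(\begin{smallmatrix}U^{(k)}&O\\ *&*\end{smallmatrix}\right)\bmod t^{p^m}$, handling the size mismatch without any completion. Second—and this is the decisive point—it forces the $l$-th elementary divisor of the relevant matrix to be at least $l-1$; feeding these bounds into Kedlaya's perturbation estimate for characteristic polynomials \cite[Theorem~4.4.2]{Ked} upgrades the congruence on the $n$-th coefficient to depth $p^m+\sum_{l=1}^{n-1}\min\{l-1,p^m\}$, which one checks is strictly greater than $m(n-1)$ for all $n\geq 2$ and all primes $p$. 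That sharpened, $n$-dependent congruence is exactly what the inductive Newton-polygon comparison needs, and it is not visible from a naive entry-by-entry matrix congruence.
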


For the proof, put
\[
P^{(k)}(X)=\det(I-XU\mid S_k(\Gamma_1(t))).
\]
First note that, as is mentioned in \cite[\S4, Remarks]{Wan}, the arguments of \cite{GMU} and \cite{Wan} 
can be generalized over suitable Drinfeld modular curves (including $X^\Delta_1(\frn)$ of \cite{Ha_DMC}). In 
particular, the characteristic power series of $U$ acting on the spaces of $\wp$-adic overconvergent Drinfeld modular 
forms of weight $k_1$ and $k_2$ are congruent modulo $\wp^{p^m}$. Also in our setting, we can show the 
congruence $P^{(k_1)}(X)\equiv P^{(k_2)}(X)\bmod t^{p^m}$ up to some factor. However, though with this we can prove 
Theorem 
\ref{MainIntro} for $p\geq 3$, it is not enough to settle the case of $p=2$ on which Bandini-Valentino stated 
their conjecture.

Instead, we investigate the formula of the representing matrix of $U$ given by Bandini-Valentino \cite[(3.1)]{BV1} more closely. 
Luckily, the representing matrix is of very special form: each entry on the $j$-th column (with the normalization that the 
leftmost column is the zeroth) is an element of $\bF_q t^{j}$. Thanks to this fact, we can give a lower bound of 
elementary divisors of the representing matrix (Lemma \ref{LemGliss}). Then a perturbation argument shows that the 
$n$-th coefficients of $P^{(k)}(X)$ 
and $P^{(k+p^m)}(X)$ are much more congruent than modulo $t^{p^m}$ up to some factor of slope $\geq k-1$ (Corollary 
\ref{Perturb}), which is enough to yield Theorem 
\ref{MainIntro} for any $p$.

\subsection*{Acknowledgments} The author would like to thank Gebhard B\"{o}ckle for informing him of Valentino's table 
computing characteristic polynomials of $U$, and Maria Valentino for pointing out an error in the author's previous 
computer calculation. This work was supported by JSPS KAKENHI Grant Number JP17K05177.



\section{Dimension variation}

Let $k\geq 2$ be an integer. Put 
\[
\Gamma_1(t)=\left\{\gamma\in \mathit{GL}_2(A)\ \middle|\ \gamma\equiv \begin{pmatrix}1&*\\0&1\end{pmatrix}\bmod 
t\right\}\subseteq \mathit{SL}_2(A).
\]
On the space $S_k(\Gamma_1(t))$ of Drinfeld cuspforms of level $\Gamma_1(t)$ and weight $k$, we consider the 
$U$-operator for $t$ defined by (\ref{EqnDefU}).
Note that we follow the usual normalization of the $U$-operator which differs from that of \cite[\S 2.4]{BV1} by $1/t$. Then Bandini-Valentino \cite[(3.1)]{BV1} explicitly describe the action of $U$ with respect to some basis $\mathbf{c}_0^{(k)},\ldots, \mathbf{c}_{k-2}^{(k)}$, which reads as follows with our normalization:

\begin{equation}\label{EqnBV}
\begin{split}
U(\mathbf{c}_j^{(k)})=&(-t)^j\tbinom{k-2-j}{j}\mathbf{c}_j^{(k)}-t^j\sum_{h\in \bZ, h\neq 0}\left\{\tbinom{k-2-j-h(q-1)}{-h(q-1)}\right.\\
&\left.+(-1)^{j+1}\tbinom{k-2-j-h(q-1)}{j} \right\}\mathbf{c}_{j+h(q-1)}^{(k)}.
\end{split}
\end{equation}
Here it is understood that the binomial coefficient $\binom{c}{d}$ is zero if any of $c,d,c-d$ is negative and the 
terms involving $\mathbf{c}_{j+h(q-1)}^{(k)}$ are zero if $j+h(q-1)\notin 
[0,k-2]$. We denote by $U^{(k)}=(U^{(k)}_{i,j})_{0\leq i,j\leq k-2}$ the representing matrix of $U$ for this basis. 
Then we have $U^{(k)}\in M_{k-1}(A)$. We identify the $t$-adic completion of $A$ with $\bF_q[[t]]$ naturally and 
consider $U^{(k)}$ as an element of $M_{k-1}(\bF_q[[t]])$.

\begin{dfn}\label{DefGliss}
Let $B=(B_{i,j})_{0\leq i\leq m-1,0\leq j\leq n-1}$ be an element of $M_{m,n}(\bF_q[[t]])$. We say $B$ is glissando if $B_{i,j}\in \bF_q t^{j}$ for any $i,j$. 
\end{dfn}

By (\ref{EqnBV}), the matrix $U^{(k)}$ is glissando.

\begin{lem}\label{LemGliss}
	Let $B=(B_{i,j})_{0\leq i\leq m-1,0\leq j\leq n-1}$ be a glissando matrix in $M_{m,n}(\bF_q[[t]])$. Let $s_1\leq 
	s_2\leq\cdots \leq s_{r}$ be the elementary divisors of $B$ (namely, they are integers or $+\infty$ such that 
	$t^{s_i}$ is the $(i-1,i-1)$-entry of the Smith normal form of $B$). Then we have $s_l\geq l-1$ for any $l$.
	\end{lem}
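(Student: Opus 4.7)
The plan is to determine the elementary divisors of $B$ explicitly, by leveraging the glissando factorization to reduce the question to one about linear independence of certain vectors over $\bF_q$.

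First I would write $B_{i,j}=c_{i,j}t^j$ with $c_{i,j}\in\bF_q$, so that $B=C\cdot\diag(1,t,\ldots,t^{n-1})$ for the constant matrix $C=(c_{i,j})\in M_{m,n}(\bF_q)$. Denoting the $j$-th column of $C$ by $c_j\in\bF_q^m$, I would form the flag
\[
V_j=\Span_{\bF_q}(c_0,\ldots,c_j)\subseteq\bF_q^m,
\]
and let $j_1<j_2<\cdots<j_r$ be the indices at which this flag strictly jumps, where $r=\rank(C)$. Since the $j_l$'s are strictly increasing non-negative integers, $j_l\geq l-1$ is automatic.

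Next I would identify the product $\prod_{i=1}^l t^{s_i}$ with the gcd of the $l\times l$ minors of $B$. By multilinearity in columns, every such minor of $B$ using columns $i_1<\cdots<i_l$ equals $t^{i_1+\cdots+i_l}$ times the corresponding minor of $C$, which lies in $\bF_q$ and is nonzero precisely when $c_{i_1},\ldots,c_{i_l}$ are linearly independent. Thus the gcd of all $l\times l$ minors of $B$ equals $t^{\mu_l}$ with
\[
\mu_l=\min\{i_1+\cdots+i_l\mid c_{i_1},\ldots,c_{i_l}\text{ linearly independent over }\bF_q\}.
\]
A matroid-style greedy argument then shows $\mu_l=j_1+\cdots+j_l$, and telescoping yields $s_l=j_l$ for $l\leq r$ (with $s_l=+\infty$ otherwise), so $s_l\geq l-1$ in every case.

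The step requiring the most care is showing the minimum in the definition of $\mu_l$ is actually \emph{achieved} at $j_1+\cdots+j_l$, rather than merely bounded below by it. The naive lower bound $s_1+\cdots+s_l\geq\binom{l}{2}$ combined with $s_l\leq s_{l+1}$ only gives $s_l\geq(l-1)/2$, which is insufficient. The sharp statement relies on the observation that if any $i_a<j_a$, then $c_{i_1},\ldots,c_{i_a}$ all lie in $V_{j_a-1}$, a space of $\bF_q$-dimension $a-1$, forcing a linear dependence; this is what pins down the minimum and yields the precise bound $s_l\geq l-1$.
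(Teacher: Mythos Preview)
Your proof is correct and takes a genuinely different route from the paper's. The paper argues by induction on the number of columns $n$: locating the leftmost nonzero column, say column $c$, one sees $s_1=c$, and the remaining elementary divisors are those of $t^{c+1}B'$ for a glissando $B'$ with fewer columns, whence $s_l=(c+1)+s'_{l-1}\geq l-1$ by the inductive hypothesis. Your approach instead factors $B=C\cdot\diag(1,t,\ldots,t^{n-1})$ and computes the elementary divisors exactly as the jump indices $j_l$ of the column flag of $C$, via the determinantal description of the Smith invariants and a greedy/matroid argument. This is longer but yields strictly more: you obtain the equality $s_l=j_l$, not merely the bound $s_l\geq l-1$. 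One minor phrasing issue: a \emph{particular} $l\times l$ minor of $C$ on columns $i_1,\ldots,i_l$ may vanish even when those columns are independent; what you actually need (and correctly use in the next line when defining $\mu_l$) is that \emph{some} choice of rows gives a nonzero minor if and only if the columns are independent.
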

\begin{proof}
	We prove the lemma by induction on $n$. For $n=1$, we have $s_1=0$ if $B\neq O$ and $s_1=+\infty$ otherwise. For 
	$n>1$, we may assume $B\neq O$ and let $c$ be the integer with $0\leq c\leq n-1$ such that the leftmost non-zero column 
	of $B$ is the $c$-th one. Since $B$ is glissando, the first elementary divisor of $B$ is $c\geq 0$ and the rest are 
	equal to the elementary divisors of a matrix $t^{c+1} B'$, where $B'$ is also glissando with $n-1$ columns. Let 
	$s'_1\leq \cdots\leq s'_{r'}$ be the elementary divisors of $B'$. By the induction hypothesis, we have $s'_l\geq 
	l-1$ and thus $s_l=c+1+s'_{l-1}\geq  l-1$ for $l\geq 2$. This concludes the proof.
	\end{proof}

Let $v_t$ be the $t$-adic additive valuation normalized as $v_t(t)=1$. For any element $P(X)=\sum_{n=0}^\infty p_n X^n\in \bF_q[[t]][[X]]$, the Newton polygon of $P(X)$ is by definition the lower convex hull of the set 
\[
\{(n,v_t(p_n))\mid n\geq 0\}.
\]

\begin{lem}\label{LemGlissSlope}
	Let $B\in M_m(\bF_q[[t]])$ be a glissando matrix. For any non-negative integer $l$, put 
	\[
	P(X)=\det(I-t^l XB)=\sum_{n=0}^m p_n X^n\in \bF_q[[t]][X]. 
	\]
	\begin{enumerate}
		\item\label{LemGlissSlope-n} $v_t(p_n)\geq ln+\frac{1}{2}n(n-1)$.
		\item\label{LemGlissSlope-s} Any slope of the Newton polygon of $P(X)$ is no less than $l$.
	\end{enumerate}
	\end{lem}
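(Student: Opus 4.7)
The plan is to reduce both assertions to a $t$-adic estimate on the principal minors of the glissando matrix $B$, by expanding the polynomial $P(X)$ in the standard way. Writing $\lambda_1, \ldots, \lambda_m$ for the eigenvalues of $B$ in an algebraic closure of $\bF_q((t))$, one has
\[
P(X) = \det(I - t^l X B) = \prod_{i=1}^m (1 - t^l \lambda_i X),
\]
so that
\[
p_n = (-t^l)^n \sum_{|S|=n} \det(B_S),
\]
where $S$ ranges over the $n$-element subsets of $\{0, 1, \ldots, m-1\}$ and $B_S$ denotes the corresponding $n \times n$ principal submatrix of $B$.

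For (1), the task reduces to showing $v_t(\det(B_S)) \geq n(n-1)/2$ for every such $S$. Writing $S = \{j_0 < j_1 < \cdots < j_{n-1}\}$, the $(a,b)$-entry of $B_S$ is $B_{j_a, j_b}\in \bF_q t^{j_b}$ by the glissando hypothesis. The Leibniz formula then gives
\[
v_t(\det(B_S)) \geq \min_{\sigma \in S_n} \sum_{a=0}^{n-1} v_t(B_{j_{\sigma(a)}, j_a}) \geq \sum_{a=0}^{n-1} j_a \geq \sum_{a=0}^{n-1} a = \frac{n(n-1)}{2},
\]
where the last inequality uses $j_a \geq a$. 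Combining this with the $(-t^l)^n$ factor yields $v_t(p_n) \geq ln + n(n-1)/2$, proving (1).

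Assertion (2) is then immediate. Since $p_0 = 1$, the Newton polygon of $P(X)$ begins at the origin, and (1) gives $v_t(p_n)/n \geq l + (n-1)/2 \geq l$ for every $n \geq 1$. Hence every point $(n, v_t(p_n))$ lies on or above the line $y = lx$, and since the Newton polygon is the lower convex hull starting at $(0,0)$ with non-decreasing slopes, each of its slopes must be at least $l$. I do not anticipate any real obstacle here; the only minor subtlety is that a principal submatrix $B_S$ is not itself glissando in the sense of Definition \ref{DefGliss} (its columns carry valuations $j_0, \ldots, j_{n-1}$ rather than $0, \ldots, n-1$), so I would invoke Leibniz directly instead of trying to reapply Lemma \ref{LemGliss}. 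Alternatively, one could factor $\diag(t^{j_0}, \ldots, t^{j_{n-1}})$ out of the columns of $B_S$ and then apply Lemma \ref{LemGliss} to the residual matrix, yielding the same bound.
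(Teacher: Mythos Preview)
Your proof is correct and follows essentially the same approach as the paper: both express $p_n$ (up to sign) as $t^{ln}$ times the sum of the principal $n\times n$ minors of $B$, bound each minor by $t^{n(n-1)/2}$ via the glissando condition, and deduce the slope bound from $p_0=1$. You have simply spelled out the Leibniz-formula estimate that the paper leaves implicit, and your remark that $B_S$ is not literally glissando (but still has the needed column valuations) is a helpful clarification.
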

\begin{proof}
	First note that, for the characteristic polynomial 
	$Q(X)=\det (XI-t^l B)$, we have $P(X)=X^{m} Q(X^{-1})$ and thus $p_n$ is, up to a sign, 
	equal to the sum of the principal $n\times n$ minors of $t^l B$. Since $B$ is glissando, this shows (\ref{LemGlissSlope-n}). Since $p_0=1$, the resulting inequality $v_t(p_n)\geq ln$ implies (\ref{LemGlissSlope-s}).
	\end{proof}

Now we put
\[
P^{(k)}(X)=\det(I-XU^{(k)})=\sum_{n=0}^{k-1} a_n^{(k)} X^n
\]
and $a_n^{(k)}=0$ for any $n\geq k$.
Let $y=N^{(k)}(x)$ be the Newton polygon of $P^{(k)}(X)$. For any non-negative rational number $\alpha$, we denote by $d(k,\alpha)$ the dimension of the generalized 
eigenspace for the eigenvalues of normalized $t$-adic valuation $\alpha$. Then $d(k,\alpha)$ is equal to the width of 
the segment of slope $\alpha$ in the Newton polygon $N^{(k)}$.

\begin{lem}\label{LemOrd}
	$d(k,0)=1$.
	\end{lem}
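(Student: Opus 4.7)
The plan is to analyze the leftmost portion of the Newton polygon $N^{(k)}$ and show that it begins with a segment of slope $0$ and width exactly $1$; by the discussion preceding the lemma, this is equivalent to $d(k,0)=1$.

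First I would dispose of the coefficients with $n\geq 2$ in one stroke. Since $U^{(k)}$ is glissando by (\ref{EqnBV}), Lemma \ref{LemGlissSlope}(\ref{LemGlissSlope-n}) applied with $l=0$ yields $v_t(a_n^{(k)})\geq \binom{n}{2}\geq 1$ for every $n\geq 2$. Hence every lattice point $(n,v_t(a_n^{(k)}))$ with $n\geq 2$ sits at height at least $1$.

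Next I would pin down $v_t(a_1^{(k)})$ exactly. Since $a_1^{(k)}=-\Tr(U^{(k)})$ and the diagonal of $U^{(k)}$ is given by $U^{(k)}_{j,j}=(-t)^j\binom{k-2-j}{j}$ (the $h=0$ specialization of (\ref{EqnBV})), only the term $j=0$ contributes a unit, namely $\binom{k-2}{0}=1$, while every term with $j\geq 1$ has valuation at least $j\geq 1$. Thus $a_1^{(k)}\equiv -1\bmod t$, so the point $(1,0)$ lies on $N^{(k)}$.

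Combining the two, $N^{(k)}$ contains the horizontal edge from $(0,0)$ to $(1,0)$ and then jumps strictly upward, so its slope-$0$ segment has width exactly $1$, giving $d(k,0)=1$. The only nontrivial observation is that the unit entry $U^{(k)}_{0,0}=1$ sits in isolation on the diagonal, decoupling $a_1^{(k)}$ from the higher-order vanishing forced by the glissando structure; this is the step doing the real work, though it is very short, so no genuine obstacle is expected.
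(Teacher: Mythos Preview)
Your proof is correct and follows essentially the same approach as the paper: both arguments use the glissando property via Lemma~\ref{LemGlissSlope}(\ref{LemGlissSlope-n}) to force $v_t(a_n^{(k)})>0$ for $n\geq 2$, and both compute $a_1^{(k)}\equiv -1\bmod t$ from the fact that $U^{(k)}_{0,0}=1$ is the unique unit on the diagonal. The only difference is the order in which you treat the two cases.
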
 
\begin{proof}
	By (\ref{EqnBV}), we have $U^{(k)}_{0,0}=\binom{k-2}{0}=1$. On the other hand, since $U^{(k)}$ is glissando, we have $v_t(U^{(k)}_{i,j})\geq j$ and 
	\[
	a_1^{(k)}=-\sum_{j=0}^{k-2} U^{(k)}_{j,j}\equiv -1\bmod t.
	\]
	Moreover, from Lemma \ref{LemGlissSlope} (\ref{LemGlissSlope-n}) we obtain $v_t(a_n^{(k)})>0$ for any $n\geq 2$. This yields the lemma. 
	\end{proof}

\begin{lem}\label{binomp}
	Let $a$ and $b$ be non-negative integers. Let $m\geq 1$ be an integer. Then we have
	\[
	\tbinom{a+p^m}{b}\equiv \tbinom{a}{b}+\tbinom{a}{b-p^m} \bmod p.
	\]
	Here it is understood that $\binom{c}{d}=0$ if any of $c,d,c-d$ is negative.
	\end{lem}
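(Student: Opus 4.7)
The plan is to prove the congruence by working in the polynomial ring $\bF_p[X]$ and comparing coefficients of $X^b$ on both sides of a suitable identity. Since the statement is a congruence modulo $p$ of integers, the cleanest route is via generating functions.

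First, I recall the standard fact that $(1+X)^{p^m} \equiv 1 + X^{p^m} \bmod p$, which follows from the Frobenius endomorphism applied $m$ times, or equivalently from Lucas' theorem applied to $\binom{p^m}{k}$ for $0<k<p^m$. Then in $\bF_p[X]$ I would compute
\[
(1+X)^{a+p^m} = (1+X)^a (1+X)^{p^m} \equiv (1+X)^a (1+X^{p^m}) = (1+X)^a + X^{p^m}(1+X)^a \bmod p.
\]

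Next I would extract the coefficient of $X^b$ on both sides. The left-hand side gives $\binom{a+p^m}{b}$. The first summand on the right contributes $\binom{a}{b}$, which is interpreted as $0$ if $b > a$, consistent with the convention in the statement. The second summand $X^{p^m}(1+X)^a$ contributes the coefficient of $X^{b-p^m}$ in $(1+X)^a$, namely $\binom{a}{b-p^m}$, which is $0$ when $b < p^m$, again matching the convention. This establishes the desired congruence.

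There is no real obstacle; the argument is a direct application of the Frobenius identity in characteristic $p$. The only thing to be a little careful about is the boundary cases where $b < p^m$ or $b > a$, but the zero convention stated in the lemma makes these cases automatic.
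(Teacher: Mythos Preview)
Your argument is correct and is precisely the paper's approach: the paper simply invokes the identity $(X+1)^{a+p^m}\equiv (X+1)^a(X^{p^m}+1)\bmod p$ and compares coefficients, exactly as you do.
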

\begin{proof}
	This follows from 
	\[
	(X+1)^{a+p^m}\equiv (X+1)^a(X^{p^m}+1)\bmod p.
	\]
	\end{proof}

\begin{prop}\label{UMod}
	Let $m\geq 1$ be an integer. Then there exist glissando matrices $C\in M_{p^m,k-1}(A)$ and $D\in M_{p^m,p^m-k+1}(A)$ satisfying
	\[
	U^{(k+p^m)}\equiv \left(\begin{array}{c|c|c}U^{(k)} & O & O \\ C & t^{k-1} D& O\end{array}\right) \bmod t^{p^m}.
	\]
	Here it is understood that the middle blocks are empty if $p^m\leq k-1$.
	\end{prop}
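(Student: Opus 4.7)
The plan is to prove the proposition by direct computation from (\ref{EqnBV}) combined with Lemma \ref{binomp}. First observe that the existence of glissando $C$ and $D$ is essentially automatic: I would take $C$ to be the submatrix of $U^{(k+p^m)}$ with rows $k-1,\ldots,k+p^m-2$ and columns $0,\ldots,k-2$, and $D$ to be the submatrix with the same rows and columns $k-1,\ldots,p^m-1$, divided by $t^{k-1}$. Both inherit the glissando property from $U^{(k+p^m)}$, and the factor $t^{k-1}$ divides every entry in the chosen column range for free. Moreover the rightmost $k-1$ columns of $U^{(k+p^m)}$ (those with index $\geq p^m$) are $\equiv 0 \bmod t^{p^m}$ automatically by the glissando property. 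Thus the real content of the proposition is two claims on the top row of blocks: (i) $U^{(k+p^m)}_{i,j}\equiv U^{(k)}_{i,j}\bmod t^{p^m}$ for $0\leq i,j\leq k-2$; and (ii) $U^{(k+p^m)}_{i,j}=0$ for $0\leq i\leq k-2$ and $k-1\leq j\leq p^m-1$ (relevant only when $p^m>k-1$).

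My first step is to unify (\ref{EqnBV}) into the single closed formula
\[
U^{(k)}_{i,j} = \delta_{ij}\,t^j - t^j\binom{k-2-i}{j-i} + (-1)^j t^j\binom{k-2-i}{j}
\]
valid for $i\equiv j \bmod (q-1)$ in the relevant range and zero otherwise; the $\delta_{ij}t^j$ correction accounts for the discrepancy between the diagonal term $(-t)^j\binom{k-2-j}{j}$ and what the off-diagonal formula would predict at $i=j$. The analogous expression holds for $U^{(k+p^m)}_{i,j}$ with $k$ replaced by $k+p^m$. Since $k+p^m-2-i=(k-2-i)+p^m$, Lemma \ref{binomp} gives $\binom{k+p^m-2-i}{d}\equiv \binom{k-2-i}{d}+\binom{k-2-i}{d-p^m}\bmod p$, and this is an equality in the characteristic-$p$ ring $\bF_q[[t]]$.

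Substituting these expansions for $d=j-i$ and $d=j$ expresses $U^{(k+p^m)}_{i,j}$ as $U^{(k)}_{i,j}$ plus two correction terms of the shape $\pm t^j\binom{k-2-i}{\star-p^m}$. For claim (i), each correction vanishes mod $t^{p^m}$: if $\star-p^m<0$ the binomial itself is zero, and otherwise $\star\geq p^m$ forces $j\geq p^m$, so $t^j\in t^{p^m}\bF_q[t]$. For claim (ii), in the range $k-1\leq j\leq p^m-1$ with $0\leq i\leq k-2$, I check that $i<j$ kills $\delta_{ij}$ and that all four binomials produced by the Lemma \ref{binomp} expansion are zero: $\binom{k-2-i}{j-i}$ and $\binom{k-2-i}{j}$ vanish because $k-2-j<0$, while $\binom{k-2-i}{j-i-p^m}$ and $\binom{k-2-i}{j-p^m}$ vanish because $j-p^m<0$. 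Hence $U^{(k+p^m)}_{i,j}$ is identically zero in this range, which is stronger than what the statement demands.

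The main potential obstacle is bookkeeping: unifying the diagonal and off-diagonal cases of (\ref{EqnBV}) into one formula without losing the $\delta_{ij}t^j$ correction, and verifying that the four relevant binomials in case (ii) all vanish (rather than merely being $\equiv 0 \bmod t^{p^m}$), so that the top-middle block is exactly zero modulo $t^{p^m}$. Beyond this, the argument is routine manipulation in characteristic $p$.
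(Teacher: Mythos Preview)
Your proof is correct and follows essentially the same approach as the paper's: both arguments apply Lemma \ref{binomp} to the explicit entries from (\ref{EqnBV}) and verify that the resulting correction binomials vanish in the relevant ranges. Your only cosmetic difference is the change of variables $i=j+h(q-1)$ to obtain the closed formula $U^{(k)}_{i,j}=\delta_{ij}t^j - t^j\binom{k-2-i}{j-i}+(-1)^j t^j\binom{k-2-i}{j}$, which unifies the diagonal and off-diagonal cases and makes the bookkeeping a bit cleaner than the paper's version, but the substance is identical.
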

\begin{proof}
	Let $j$ be an integer satisfying $0\leq j\leq k+p^m-2$. By (\ref{EqnBV}), the element $U(\mathbf{c}_j^{(k+p^m)})$ is equal to
	\begin{align*}
	(&-t)^j\tbinom{k+p^m-2-j}{j}\mathbf{c}_{j}^{(k+p^m)}\\
	&-t^j\sum_{h\in \bZ, h\neq 0}\left\{\tbinom{k+p^m-2-j-h(q-1)}{-h(q-1)}+(-1)^{j+1}\tbinom{k+p^m-2-j-h(q-1)}{j} \right\}\mathbf{c}_{j+h(q-1)}^{(k+p^m)}.
	\end{align*}
	
	Note that both of $U^{(k+p^m)}_{i,j}$ and $U^{(k)}_{i,j}$ are divisible by $t^{p^m}$ for $j\geq p^m$. Since $U^{(k+p^m)}$ is glissando, what we need to show is
	\begin{enumerate}
		\item\label{UModSmall} For any $j \leq \min\{k-2,p^m-1\}$ and $i\in [0,k-2]$, we have $U^{(k+p^m)}_{i,j}=U^{(k)}_{i,j}$, and
		\item\label{UModMiddle} If $k\leq p^m$, then for any $j\in [k-1,p^m-1]$ and $i\in [0,k-2]$, we have $U^{(k+p^m)}_{i,j}=0$.
		\end{enumerate}

	First we suppose $j\leq \min\{k-2,p^m-1\}$. By Lemma \ref{binomp}, the element $U(\mathbf{c}_j^{(k+p^m)})$ equals
	\begin{align*}
	&(-t)^j\left(\tbinom{k-2-j}{j}+\tbinom{k-2-j}{j-p^m}\right)\mathbf{c}_{j}^{(k+p^m)}\\
	&-t^j\sum_{\substack{h\in \bZ, h\neq 0\\j+h(q-1)\in [0,k-2]}}\left\{\tbinom{k-2-j-h(q-1)}{-h(q-1)}+\tbinom{k-2-j-h(q-1)}{-h(q-1)-p^m}\right.\\
	&\left.+(-1)^{j+1}\left(\tbinom{k-2-j-h(q-1)}{j}+ \tbinom{k-2-j-h(q-1)}{j-p^m}\right)\right\}\mathbf{c}_{j+h(q-1)}^{(k+p^m)}\\
	&-t^j\sum_{\substack{h\in \bZ, h\neq 0\\j+h(q-1)\geq k-1}}\left\{\tbinom{k+p^m-2-j-h(q-1)}{-h(q-1)}+(-1)^{j+1}\tbinom{k+p^m-2-j-h(q-1)}{j}\right\}\mathbf{c}_{j+h(q-1)}^{(k+p^m)}.\\
	\end{align*}
	Hence $U(\mathbf{c}_j^{(k+p^m)})$ agrees with
	\begin{align*}
	&\sum_{i=0}^{k-2}U^{(k)}_{i,j}\mathbf{c}_i^{(k+p^m)}+(-t)^j\tbinom{k-2-j}{j-p^m}\mathbf{c}_{j}^{(k+p^m)}\\
	&-t^j\sum_{\substack{h\in \bZ, h\neq 0\\j+h(q-1)\in [0,k-2]}}\left\{\tbinom{k-2-j-h(q-1)}{-h(q-1)-p^m}+(-1)^{j+1}\tbinom{k-2-j-h(q-1)}{j-p^m}\right\}\mathbf{c}_{j+h(q-1)}^{(k+p^m)}\\
	&-t^j\sum_{\substack{h\in \bZ, h\neq 0\\j+h(q-1)\geq k-1}}\left\{\tbinom{k+p^m-2-j-h(q-1)}{-h(q-1)}+(-1)^{j+1}\tbinom{k+p^m-2-j-h(q-1)}{j}\right\}\mathbf{c}_{j+h(q-1)}^{(k+p^m)}.\\
	\end{align*}
	
	Since $j<p^m$, we have $\binom{k-2-j}{j-p^m}=0$. For the case of $j+h(q-1)\in [0,k-2]$, we also have 
	$-h(q-1)-p^m\leq j-p^m<0$ and $\binom{k-2-j-h(q-1)}{-h(q-1)-p^m}=\binom{k-2-j-h(q-1)}{j-p^m}=0$. This proves (\ref{UModSmall}).
	
	Next we suppose $k\leq p^m$ and $j\in [k-1,p^m-1]$. For any $i\in [0,k-2]$, the element $U^{(k+p^m)}_{i,j}$ is equal to
	\[
	-t^j\left\{\tbinom{k+p^m-2-j-h(q-1)}{-h(q-1)}+(-1)^{j+1}\tbinom{k+p^m-2-j-h(q-1)}{j}\right\}
	\]
	if we can write $i=j+h(q-1)$ with some $h\neq 0$, and zero otherwise. Since $i\leq k-2$, we have $k-2-j-h(q-1)\geq 0$ and Lemma \ref{binomp} implies
	\begin{align*}
	\tbinom{k-2-j-h(q-1)+p^m}{-h(q-1)}&=\tbinom{k-2-j-h(q-1)}{-h(q-1)}+\tbinom{k-2-j-h(q-1)}{-h(q-1)-p^m},\\
	\tbinom{k-2-j-h(q-1)+p^m}{j}&=\tbinom{k-2-j-h(q-1)}{j}+\tbinom{k-2-j-h(q-1)}{j-p^m}.
	\end{align*}
	Since $i=j+h(q-1)\in [0,k-2]$ and $j<p^m$, we have 
	$\binom{k-2-j-h(q-1)}{-h(q-1)-p^m}=\binom{k-2-j-h(q-1)}{j-p^m}=0$ as is seen above. Since $j\geq k-1$, we also have 
	$\binom{k-2-j-h(q-1)}{-h(q-1)}=\binom{k-2-j-h(q-1)}{j}=0$. This proves (\ref{UModMiddle}) and the proposition follows.
\end{proof}

Let $V\in M_{k+p^m-1}(A)$ be the matrix of the right-hand side of Proposition \ref{UMod}. Let $D'$ be the upper $(p^m-k+1)\times (p^m-k+1)$ block of $D$ if $k\leq p^m$ and $D'=O$ otherwise. Then $D'$ is also glissando. Put
\[
\tilde{P}(X)=\det(I-XV)=P^{(k)}(X)\det(I-t^{k-1}XD')
\]
and write $\tilde{P}(X)=\sum_{n=0}^{k+p^m-1} \tilde{a}_n X^n$. We denote by $\tilde{N}$ the Newton polygon of $\tilde{P}(X)$.

\begin{cor}\label{Perturb}
	Let $m$ and $n$ be integers satisfying $m\geq 1$ and $0\leq n\leq k+p^m-1$. Then we have
	\[
	v_t(a_n^{(k+p^m)}-\tilde{a}_n)\geq p^m+\sum_{l=1}^{n-1}\min\{l-1, p^m\}.
	\]
	\end{cor}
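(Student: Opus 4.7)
The plan is to write $U^{(k+p^m)} = V + W$ with $V$ as in Proposition~\ref{UMod}, and to expand the principal minors of $V + W$ by multilinearity column-by-column. First, I would note a preliminary strengthening of Proposition~\ref{UMod}: the matrix $W$ is not just $t^{p^m}$-divisible, but glissando, since both $U^{(k+p^m)}_{i,j}$ and $V_{i,j}$ lie in $\bF_q t^j$. Combining glissando-ness with $t^{p^m}$-divisibility forces $W_{i,j} = 0$ for all $j < p^m$ (because $\bF_q t^j \cap t^{p^m}\bF_q[[t]] = 0$ there), while for $j \geq p^m$ one still has $v_t(W_{i,j}) \geq j$.

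Starting from $a_n^{(k+p^m)} - \tilde{a}_n = (-1)^n \sum_{|S| = n}(\det U^{(k+p^m)}_S - \det V_S)$, the expansion $\det((V+W)_S) = \sum_{T \subseteq S} \det M_{S,T}$ (where the $j$-th column of $M_{S,T}$ is taken from $W$ if $j \in T$ and from $V$ otherwise) eliminates the $T = \emptyset$ term as $\det V_S$, so only $T \neq \emptyset$ contributes. Since $W$'s $j$-th column vanishes for $j < p^m$, a nonzero contribution requires $T \subseteq \{j \in S : j \geq p^m\}$, and hence $\max S \geq p^m$. The uniform glissando bound on both $V$ and $W$ then yields $v_t(\det M_{S,T}) \geq \sum_{j \in S} j$ via the permutation expansion.

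The final task is the combinatorial estimate: minimize $\sum_l j_l$ over $n$-subsets $S = \{j_1 < \cdots < j_n\} \subseteq \{0, \ldots, k+p^m-2\}$ with $j_n \geq p^m$. Using $j_l \geq l-1$ and $j_n \geq \max(n-1, p^m)$, the minimum is $\binom{n-1}{2} + \max(n-1, p^m)$, attained by $\{0, 1, \ldots, n-2, \max(n-1, p^m)\}$. A short case split on whether $n \leq p^m + 1$ or $n > p^m + 1$ shows this exceeds $p^m + \sum_{l=1}^{n-1}\min(l-1, p^m)$ by exactly $\binom{\max(0, n - p^m)}{2} \geq 0$, which gives the corollary. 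The main obstacle is this last combinatorial comparison; the minor expansion and valuation accounting are routine once the column-vanishing structure of $W$ for $j < p^m$ is in place.
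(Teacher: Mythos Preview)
Your argument is correct. The key observation that $W=U^{(k+p^m)}-V$ is itself glissando (because both $U^{(k+p^m)}$ and $V$ are), and hence has identically zero columns for $j<p^m$, is valid; the multilinear column expansion and the bound $v_t(\det M_{S,T})\geq \sum_{j\in S} j$ follow immediately from the Leibniz formula, and your combinatorial minimisation is accurate (including the computation of the excess $\binom{\max(0,n-p^m)}{2}$).

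This route is genuinely different from the paper's. The paper writes $U^{(k+p^m)}=V+t^{p^m}W'$, notes that $V$ is glissando, bounds the elementary divisors of $V$ via Lemma~\ref{LemGliss}, and then invokes a general perturbation estimate for characteristic polynomials (\cite[Theorem~4.4.2]{Ked}) to obtain
\[
v_t(a_n^{(k+p^m)}-\tilde a_n)\;\geq\; p^m+\sum_{l=1}^{n-1}\min\{s_l,p^m\}\;\geq\;p^m+\sum_{l=1}^{n-1}\min\{l-1,p^m\}.
\]
So the paper treats the $t^{p^m}$-divisibility of the perturbation and the glissando structure of $V$ as separate inputs to a black-box inequality, whereas you exploit the stronger fact that the \emph{perturbation itself} is glissando, which forces its first $p^m$ columns to vanish outright. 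The payoff of your approach is that it is entirely self-contained (no appeal to \cite{Ked}) and even yields the slightly sharper bound $\binom{n-1}{2}+\max(n-1,p^m)$; the payoff of the paper's approach is brevity and the fact that it isolates the general perturbation principle from the specific combinatorics of this problem.
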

\begin{proof}
Write 
\[
U^{(k+p^m)}=V+t^{p^m} W
\]
with some $W\in M_{k+p^m-1}(A)$. Let $s_1\leq\cdots\leq s_{k+p^m-1}$ be the 
elementary 
divisors of $V$. Since $V$ is glissando, by Lemma \ref{LemGliss} we obtain $s_l\geq l-1$ for any $l$. Then \cite[Theorem 
4.4.2]{Ked} shows
\[
v_t(a_n^{(k+p^m)}-\tilde{a}_n)\geq p^m+\sum_{l=1}^{n-1}\min\{s_l, p^m\}\geq p^m+\sum_{l=1}^{n-1}\min\{l-1, p^m\}.
\]
\end{proof}

\begin{lem}\label{LemNum}
	Let $m$ and $n$ be integers satisfying $m \geq 1$ and $n\geq 2$. Then we have
	\[
	p^m+\sum_{l=1}^{n-1}\min\{l-1, p^m\}>m(n-1).
	\]
	\end{lem}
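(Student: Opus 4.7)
The plan is to reduce everything to the single elementary estimate $2p^m > m(m+1)$, valid for all $p \geq 2$ and $m \geq 1$. The first step exploits that $p^m$ already dominates $m$: since $p \geq 2$ forces $p^m \geq 2^m \geq m+1 > m$, one has $\min\{l-1, p^m\} \geq \min\{l-1, m\}$ term-by-term. So it suffices to prove the apparently weaker statement
\[
p^m + \sum_{l=1}^{n-1}\min\{l-1, m\} > m(n-1).
\]
Truncating the ``flat part'' of the sum at height $m$ rather than $p^m$ makes the bookkeeping independent of $p$.

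Next I would split on the size of $n$ relative to $m$. In the range $2 \leq n \leq m+2$, every term $\min\{l-1, m\}$ is simply $l-1$, so the sum collapses to $\binom{n-1}{2}$ and the target becomes $2p^m > (n-1)(2m-n+2)$. The right-hand side is a downward parabola in $n$ whose maximum over the integer interval $[2, m+2]$ is $m(m+1)$, attained at $n=m+1$ and $n=m+2$. In the complementary range $n \geq m+3$, the sum breaks as
\[
\sum_{l=1}^{m+1}(l-1) + \sum_{l=m+2}^{n-1} m = \tfrac{m(m+1)}{2} + m(n-m-2),
\]
and the $n$-linear terms cancel against $m(n-1)$ on the right, leaving precisely the same condition $2p^m > m(m+1)$. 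So the inequality is uniform in $n$ once the reduction step is made.

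Finally, the estimate $2p^m > m(m+1)$ for $p \geq 2$ and $m \geq 1$ follows by a short induction on $m$: the base cases $m=1$ and $m=2$ are immediate ($2p \geq 4 > 2$ and $2p^2 \geq 8 > 6$), and for $m \geq 2$ the inductive step $2p^{m+1} \geq 4p^m > 2m(m+1) \geq (m+1)(m+2)$ uses nothing more than $2m \geq m+2$.

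No step is genuinely hard; the only point requiring care is checking that both ranges of $n$ really do collapse to the same condition $2p^m > m(m+1)$, after which the remainder is a routine exponential-versus-polynomial inequality. The conceptual content is just that the ``$p^m$'' outside the sum, together with either the quadratic portion or the linear portion of the sum (depending on whether $n$ is small or large), dominates $m(n-1)$ by the same margin.
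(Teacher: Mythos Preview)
Your proof is correct and close in spirit to the paper's, both ultimately hinging on the elementary estimate $2p^m > m(m+1)$. The paper works directly with the original sum and splits on whether $n-2 \geq p^m$: in the small-$n$ case it completes the square in $n$, noting that $(n-(m+\tfrac{3}{2}))^2 \geq \tfrac{1}{4}$ for integer $n$, while in the large-$n$ case it instead invokes the related bound $\tfrac{1}{2}p^m \geq m$. Your preliminary reduction $\min\{l-1,p^m\}\geq \min\{l-1,m\}$ moves the case threshold from $p^m$ down to $m$, after which both ranges collapse cleanly to the single target $2p^m > m(m+1)$; the completing-the-square step is replaced by maximizing the concave quadratic $(n-1)(2m-n+2)$ over integers. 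This buys a tidier uniformity across the two cases at the cost of one extra (trivial) monotonicity observation; neither route is harder than the other.
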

\begin{proof}
	First we assume $n-2\geq p^m$. Then the left-hand side of the lemma is equal to
	\begin{equation}\label{EqnLarge}
	p^m+\sum_{l=1}^{p^m+1}(l-1)+\sum_{l=p^m+2}^{n-1} p^m=\frac{1}{2}p^m(p^m+3)+p^m(n-2-p^m).
	\end{equation}
	For $m\geq 1$, we have $\frac{1}{2}p^m\geq m$ and thus $\frac{1}{2}p^m(p^m+3)\geq m(p^m+2)$. Hence the right-hand side of (\ref{EqnLarge}) is greater than $m(n-1)$. 
	
	Next we assume $n-2< p^m$. In this case, the left-hand side of the lemma equals $p^m+\frac{1}{2}(n-1)(n-2)$. It is greater than $m(n-1)$ if and only if
	\[
	\left(n-\left(m+\frac{3}{2}\right)\right)^2+2p^m-m(m+1)-\frac{1}{4}> 0.
	\]
	Since $m$ and $n$ are integers, the first term is no less than $\frac{1}{4}$. Since we can show $2p^m>m(m+1)$ for any $p$ and $m\geq 1$, the lemma follows also for this case.
	\end{proof}

\begin{lem}\label{LemWindow}
	The part of the Newton polygon $\tilde{N}$ of $\tilde{P}(X)$ of slope less than $k-1$ agrees with that of $N^{(k)}$.
	\end{lem}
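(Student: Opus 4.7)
The plan is to exploit the factorization
\[
\tilde{P}(X)=P^{(k)}(X)\cdot \det(I-t^{k-1}XD')
\]
together with the standard fact that the Newton polygon of a product of power series (with non-zero constant terms) is obtained by taking the multiset union of the slopes of the two factors. Thus the claim reduces to showing that the second factor $\det(I-t^{k-1}XD')$ contributes no slope strictly less than $k-1$.

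First I would recall that $D'$ is defined to be either the upper $(p^m-k+1)\times(p^m-k+1)$ block of the glissando matrix $D$ from Proposition \ref{UMod} when $k\leq p^m$, or the zero matrix otherwise. In either case $D'$ is glissando (a principal leading block of a glissando matrix is glissando, and the zero matrix trivially so). Now applying Lemma \ref{LemGlissSlope}(\ref{LemGlissSlope-s}) to $B=D'$ with $l=k-1$ shows that every slope of the Newton polygon of $\det(I-t^{k-1}XD')$ is at least $k-1$. (If $D'=O$ the factor is identically $1$ and there is nothing to verify.)

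Second, I would invoke the multiplicativity of Newton polygons: if $f(X),g(X)\in \bF_q[[t]][[X]]$ have non-zero constant terms, then the multiset of slopes of $fg$ is the disjoint union of the slopes of $f$ and of $g$. Both $P^{(k)}(X)$ and $\det(I-t^{k-1}XD')$ have constant term $1$, so this applies to $\tilde{P}(X)=P^{(k)}(X)\det(I-t^{k-1}XD')$. Consequently, the multiset of slopes of $\tilde{N}$ that are strictly less than $k-1$ coincides with the multiset of slopes of $N^{(k)}$ strictly less than $k-1$. Since a Newton polygon is determined by its slope multiset together with the starting point at $(0,0)$, the portions of $\tilde{N}$ and $N^{(k)}$ of slope less than $k-1$ agree.

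There is essentially no obstacle here beyond correctly identifying that $D'$ is glissando and citing the additivity of slopes for products; the analytic content has already been packaged in Lemma \ref{LemGlissSlope}.
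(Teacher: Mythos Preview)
Your proof is correct and takes essentially the same approach as the paper: both rely on Lemma \ref{LemGlissSlope}(\ref{LemGlissSlope-s}) to see that the factor $\det(I-t^{k-1}XD')$ contributes only slopes $\geq k-1$, then use the multiplicativity of Newton polygons. The only cosmetic difference is that the paper phrases this multiplicativity in terms of roots (slopes correspond to negatives of valuations of roots), whereas you invoke the slope-multiset formulation directly.
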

\begin{proof}
	For any $Q(X)\in \bF_q[[t]][X]$ and any non-negative rational number $\alpha$, the Newton polygon of $Q(X)$ has a 
	segment of slope $\alpha$ and width $l$ if and only if it has exactly $l$ roots of normalized $t$-adic 
	valuation $-\alpha$. By Lemma \ref{LemGlissSlope} (\ref{LemGlissSlope-s}), every root of the polynomial 
	$\det(I-t^{k-1}XD')$ has normalized $t$-adic valuation no more than $-(k-1)$. Thus, for $\tilde{P}(X)$ and 
	$P^{(k)}(X)$, the sets of roots of normalized $t$-adic valuation more than $-(k-1)$ agree including 
	multiplicities. This shows the lemma.
\end{proof}

\begin{thm}\label{Main}
	Let $k$ and $m$ be integers satisfying $k\geq 2$ and $m\geq 0$. Let $\alpha$ be a non-negative rational number satisfying $\alpha\leq m$ and $\alpha< k-1$. Then we have $d(k+p^m,\alpha)=d(k,\alpha)$.
	\end{thm}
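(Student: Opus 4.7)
The plan is to deduce $d(k+p^m,\alpha)=d(k,\alpha)$ by comparing the Newton polygons $\tilde N$ (of $\tilde P$) and $N^{(k+p^m)}$ on their slope-$\leq m$ portions, and then invoking Lemma~\ref{LemWindow} to replace $\tilde N$ by $N^{(k)}$. The boundary case $m=0$ is immediate: the hypothesis forces $\alpha=0$, and Lemma~\ref{LemOrd} yields $d(k,0)=1=d(k+1,0)$. Thus I assume $m\geq 1$ from now on.

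By Lemma~\ref{LemWindow}, the slope-$\alpha$ width of $\tilde N$ equals $d(k,\alpha)$ whenever $\alpha<k-1$, so it suffices to show that $\tilde N$ and $N^{(k+p^m)}$ agree on their slope-$\leq m$ portions. Combining Corollary~\ref{Perturb} with Lemma~\ref{LemNum} produces the strict inequality
\[
v_t\bigl(a_n^{(k+p^m)}-\tilde a_n\bigr)>m(n-1)\quad\text{for every }n\geq 2.
\]
Since both $\tilde P$ and $P^{(k+p^m)}$ have constant term $1$, and Lemma~\ref{LemOrd} (applied to both $k$ and $k+p^m$, noting that the extra factor $\det(I-t^{k-1}XD')$ contributes only slopes $\geq k-1\geq 1$) shows that $\tilde N$ and $N^{(k+p^m)}$ each contain a slope-$0$ edge of width exactly one, each of these polygons passes through the vertex $(1,0)$. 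Consequently, any vertex $(n_0,h_0)$ lying on the slope-$\leq m$ portion of either polygon satisfies $h_0\leq m(n_0-1)$, and combining this with the displayed bound forces $v_t(a_{n_0}^{(k+p^m)})=v_t(\tilde a_{n_0})=h_0$. A short lower-convex-hull argument then upgrades this coincidence of vertex heights to equality of the full slope-$\leq m$ portions: on the slope-$\leq m$ range both polygons lie on or above the common vertices, while for $n$ beyond that range Lemma~\ref{LemNum} places both $(n,v_t(\tilde a_n))$ and $(n,v_t(a_n^{(k+p^m)}))$ strictly above the line $y=m(x-1)$, so neither hull can dip back to alter the slope-$\leq m$ silhouette.

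The main obstacle is the boundary case $\alpha=m$, where vertices can lie exactly on the auxiliary line $y=m(x-1)$. The strictness of the inequality $>m(n-1)$ provided by Lemma~\ref{LemNum} is crucial here: a non-strict bound would leave room for $v_t(a_n^{(k+p^m)})$ to differ from $v_t(\tilde a_n)$ at such vertices and so perturb the polygon precisely where the slope-$m$ width is measured. That sharpness ultimately descends from the glissando structure of $U^{(k+p^m)}$ via Lemma~\ref{LemGliss}, which I would invoke without modification.
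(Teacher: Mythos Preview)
Your overall strategy coincides with the paper's: reduce to comparing $\tilde N$ with $N^{(k+p^m)}$ on slopes $\leq m$, using the bound $v_t(a_n^{(k+p^m)}-\tilde a_n)>m(n-1)$ from Corollary~\ref{Perturb} and Lemma~\ref{LemNum}, and then translate back to $N^{(k)}$ via Lemma~\ref{LemWindow}. The paper carries out the Newton-polygon comparison by an explicit induction on the successive slopes $\alpha_1<\alpha_2<\cdots$, while you attempt to replace this induction by a single ``short lower-convex-hull argument''. That shortcut, as written, does not go through.

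The specific problem is the sentence ``for $n$ beyond that range Lemma~\ref{LemNum} places both $(n,v_t(\tilde a_n))$ and $(n,v_t(a_n^{(k+p^m)}))$ strictly above the line $y=m(x-1)$''. First, Lemma~\ref{LemNum} (via Corollary~\ref{Perturb}) bounds only the \emph{difference} $v_t(a_n^{(k+p^m)}-\tilde a_n)$; it says nothing about the individual valuations. Second, the claim itself is false in general: if the slope-$\leq m$ portion of $\tilde N$ ends at a vertex $(n_1,h_1)$ with $h_1$ strictly below $m(n_1-1)$, then for several values of $n>n_1$ one can still have $\tilde N(n)\leq m(n-1)$, and hence possibly $v_t(\tilde a_n)\leq m(n-1)$. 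So ``neither hull can dip back'' is not justified by what you wrote. What \emph{is} true is that the set $\{n: v_t(\tilde a_n)\leq m(n-1)\}$ coincides with $\{n: v_t(a_n^{(k+p^m)})\leq m(n-1)\}$ (with equal valuations on this common set), and one must then argue carefully that the slope-$\leq m$ portion of each polygon is determined by this set together with $(0,0)$ and $(1,0)$. That argument is not automatic: you need to check both that the polygons agree on the shorter slope-$\leq m$ interval \emph{and} that neither polygon has a further slope-$\leq m$ edge beyond it. The paper's slope-by-slope induction handles exactly this, tracking at each stage which polygon the next slope appears in and using the vertex equality to force agreement; your sketch skips this step and substitutes an incorrect assertion in its place.
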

\begin{proof}
	Let $\{\alpha_1,\ldots,\alpha_N\}$ be the set of slopes of the Newton polygons $N^{(k+p^m)}$ and $N^{(k)}$ which is no more than $m$ and less than $k-1$, and renumber them so that $\alpha_i< \alpha_{i+1}$ for any $i$. We proceed by induction, following the proof of \cite[Lemma 4.1]{Wan}. By Lemma \ref{LemOrd}, we have $\alpha_1=0$ and $d(k+p^m,0)=d(k,0)=1$. Thus we may assume $m\geq 1$ and $N\geq 2$. 
	
	Suppose that for some $r\leq N-1$, the equality $d(k+p^m,\alpha_i)=d(k,\alpha_i)$ holds for any $i$ satisfying $1\leq i\leq r$. By Lemma \ref{LemWindow}, this means that the Newton polygons $N^{(k)}$, $N^{(k+p^m)}$ and $\tilde{N}$ agree with each other on the part of slope no more than $\alpha_r$. Put $\alpha=\alpha_{r+1}>\alpha_1=0$ and let us prove $d(k+p^m,\alpha)=d(k,\alpha)$. We choose $k'\in \{k,k+p^m\}$ such that the slope $\alpha$ occurs in $N^{(k')}$ and let $k''$ be the other. Let $\beta$ be the slope of $N^{(k'')}$ on the right of $\alpha_r$. Then $\beta\geq \alpha$. 
	
	Let $(n,v_t(a_n^{(k')}))$ be the endpoint of the segment of $N^{(k')}$ of slope $\alpha$. Since the Newton polygon $N^{(k')}$ has a segment of slope zero, we have $n\geq 2$ and 
	\[
	N^{(k')}(n)=v_t(a_n^{(k')})\leq \alpha (n-1)\leq m(n-1).
	\]
	Then Corollary \ref{Perturb} and Lemma \ref{LemNum} imply 
	\begin{equation}\label{EqnDiff}
	v_t(a_n^{(k')})<v_t(a_n^{(k+p^m)}-\tilde{a}_n).
	\end{equation}
	
	If $k'=k$, then Lemma \ref{LemWindow} shows $v_t(a_n^{(k')})=v_t(a_n^{(k)})=v_t(\tilde{a}_n)$ and from (\ref{EqnDiff}) we obtain $v_t(a_n^{(k+p^m)})=v_t(\tilde{a}_n)=v_t(a_n^{(k)})$. This equality implies $\alpha=\beta$ and $d(k,\alpha)\leq d(k+p^m,\alpha)$. In particular, the slope $\alpha$ also occurs in $N^{(k+p^m)}$.

	If $k'=k+p^m$, then (\ref{EqnDiff}) gives $v_t(\tilde{a}_n)=v_t(a_n^{(k+p^m)})$. Let $\gamma$ be the slope of the 
	Newton polygon $\tilde{N}$ on the right of $\alpha_r$. Then this equality implies $\gamma\leq \alpha<k-1$. By Lemma 
	\ref{LemWindow}, we have $\beta=\gamma\leq \alpha$. Therefore, we have $\alpha=\beta=\gamma$ and the width of the 
	segment of slope $\alpha$ in $\tilde{N}$ is no less than that in $N^{(k+p^m)}$. Thus Lemma \ref{LemWindow} again 
	shows $d(k,\alpha)\geq d(k+p^m,\alpha)$. In particular, the slope $\alpha$ also occurs in 
	$N^{(k)}$. Combining these two cases, we obtain $d(k,\alpha)=d(k+p^m,\alpha)$. This concludes the proof of Theorem 
	\ref{Main}.
\end{proof}



\section{Remarks}

Computations using (\ref{EqnBV}) with Pari/GP indicate that the slopes appearing in $S_k(\Gamma_1(t))$ have some 
patterns (see also \cite[\S6]{BV0}). The below is a table of the case $p=q=2$, where the bold numbers denote 
multiplicities.

\begin{table}[h]
	\begin{center}
		\begin{tabular}{c|c}
			$k$&slopes\\\hline
			$2$& $0^{\mathbf{1}}$\\
			$3$& $0^{\mathbf{1}}, +\infty^{\mathbf{1}}$\\
			$4$& $0^{\mathbf{1}}, 1^{\mathbf{1}},+\infty^{\mathbf{1}}$\\
			$5$& $0^{\mathbf{1}}, \tfrac{3}{2}^{\mathbf{2}}, +\infty^{\mathbf{1}}$\\
			$6$& $0^{\mathbf{1}}, 1^{\mathbf{1}}, 2^{\mathbf{1}}, +\infty^{\mathbf{2}}$\\
			$7$& $0^{\mathbf{1}}, 2^{\mathbf{1}}, \tfrac{5}{2}^{\mathbf{2}}, +\infty^{\mathbf{2}}$\\
			$8$& $0^{\mathbf{1}}, 1^{\mathbf{1}}, 3^{\mathbf{3}}, +\infty^{\mathbf{2}}$\\
			$9$& $0^{\mathbf{1}}, \tfrac{3}{2}^{\mathbf{2}}, \tfrac{7}{2}^{\mathbf{2}}, +\infty^{\mathbf{3}}$\\
			$10$& $0^{\mathbf{1}}, 1^{\mathbf{1}}, 2^{\mathbf{1}}, 4^{\mathbf{3}}, +\infty^{\mathbf{3}}$\\
			$11$& $0^{\mathbf{1}}, 2^{\mathbf{1}}, 4^{\mathbf{1}}, \tfrac{9}{2}^{\mathbf{4}}, +\infty^{\mathbf{3}}$\\
			$12$& $0^{\mathbf{1}}, 1^{\mathbf{1}}, 3^{\mathbf{1}}, 4^{\mathbf{1}}, 5^{\mathbf{3}}, 
			+\infty^{\mathbf{4}}$\\
		\end{tabular}
		\begin{tabular}{c|c}
			$k$&slopes\\\hline
			$13$& $0^{\mathbf{1}}, \tfrac{3}{2}^{\mathbf{2}}, 4^{\mathbf{1}}, \tfrac{11}{2}^{\mathbf{4}}, 
			+\infty^{\mathbf{4}}$\\
			$14$& $0^{\mathbf{1}}, 1^{\mathbf{1}}, 2^{\mathbf{1}}, 5^{\mathbf{1}}, 6^{\mathbf{5}}, 
			+\infty^{\mathbf{4}}$\\
			$15$& $0^{\mathbf{1}}, 2^{\mathbf{1}}, \tfrac{5}{2}^{\mathbf{2}}, 6^{\mathbf{1}}, 
			\tfrac{13}{2}^{\mathbf{4}}, +\infty^{\mathbf{5}}$\\
			$16$& $0^{\mathbf{1}}, 1^{\mathbf{1}}, 3^{\mathbf{3}}, 7^{\mathbf{5}}, +\infty^{\mathbf{5}}$\\
			$17$& $0^{\mathbf{1}}, \tfrac{3}{2}^{\mathbf{2}}, \tfrac{7}{2}^{\mathbf{2}}, \tfrac{15}{2}^{\mathbf{6}}, 
			+\infty^{\mathbf{5}}$\\
			$18$& $0^{\mathbf{1}}, 1^{\mathbf{1}}, 2^{\mathbf{1}}, 4^{\mathbf{3}}, 8^{\mathbf{5}}, 
			+\infty^{\mathbf{6}}$\\
			$19$& $0^{\mathbf{1}}, 2^{\mathbf{1}}, 4^{\mathbf{1}}, \tfrac{9}{2}^{\mathbf{2}}, 8^{\mathbf{1}}, 
			\tfrac{17}{2}^{\mathbf{6}}, +\infty^{\mathbf{6}}$\\
			$20$& $0^{\mathbf{1}}, 1^{\mathbf{1}}, 3^{\mathbf{1}}, 4^{\mathbf{1}}, 5^{\mathbf{1}}, 8^{\mathbf{1}}, 
			9^{\mathbf{7}}, +\infty^{\mathbf{6}}$\\
			$21$& $0^{\mathbf{1}}, \tfrac{3}{2}^{\mathbf{2}}, 4^{\mathbf{1}}, \tfrac{11}{2}^{\mathbf{2}}, 
			8^{\mathbf{1}}, \tfrac{19}{2}^{\mathbf{6}}, +\infty^{\mathbf{7}}$\\
			$22$& $0^{\mathbf{1}}, 1^{\mathbf{1}}, 2^{\mathbf{1}}, 5^{\mathbf{1}}, 6^{\mathbf{1}}, 8^{\mathbf{1}}, 
			9^{\mathbf{1}}, 10^{\mathbf{7}}, +\infty^{\mathbf{7}}$\\
			$23$& $0^{\mathbf{1}}, 2^{\mathbf{1}}, \tfrac{5}{2}^{\mathbf{2}}, 6^{\mathbf{1}}, 8^{\mathbf{1}}, 
			10^{\mathbf{1}}, \tfrac{21}{2}^{\mathbf{8}}, +\infty^{\mathbf{7}}$\\
		\end{tabular}
	\end{center}
\end{table}

From the table, it seems that only small denominators are allowed for slopes: In the author's computation, as is already mentioned in \cite[\S1]{BV0}, the only 
case a non-trivial denominator appears is the case of $p=2$ and the denominator is at most $2$. Moreover, it seems 
likely that the finite slopes of $S_k(\Gamma_1(t))$ are less than $k-1$, and that for any $n$, the $n$-th smallest 
finite slope of $S_k(\Gamma_1(t))$ is bounded independently of $k$ (say, by $q^{n-1}$). If the 
latter observations hold in general, then combined with Theorem \ref{Main} it follows that for any $n$, the $n$-th 
smallest 
finite slopes of $S_k(\Gamma_1(t))$ are periodic of $p$-power period with respect to $k$ including multiplicities. For example, it 
seems from the table that the third smallest finite slopes of $S_k(\Gamma_1(t))$ in the case of $p=q=2$ are the repetition of
\[
2^{\mathbf{1}}, \tfrac{5}{2}^{\mathbf{2}}, 3^{\mathbf{3}},\tfrac{7}{2}^{\mathbf{2}}, 2^{\mathbf{1}}, 4^{\mathbf{1}}, 3^{\mathbf{1}},4^{\mathbf{1}}.
\]
This could be thought of as a function field analogue of Emerton's theorem \cite{Emerton} which asserts that the minimal 
slopes of $S_k(\Gamma_0(2))$ are periodic of period $8$.








\begin{thebibliography}{9a}


 
\bibitem[AIP]{AIP-SMF}
F. Andreatta, A. Iovita and V. Pilloni: \emph{$p$-adic families of Siegel modular cuspforms}, Ann. of Math. (2) \textbf{181} (2015), no. 2, 623--697. 



\bibitem[BV1]{BV1}
A. Bandini and M. Valentino: \emph{On the Atkin $U_t$-operator for $\Gamma_1(t)$-invariant Drinfeld cusp forms}, to appear in Int. J. Number Theory.

\bibitem[BV2]{BV0} 
A. Bandini and M. Valentino: \emph{On the Atkin $U_t$-operator for $\Gamma_0(t)$-invariant Drinfeld cusp forms}, 
preprint, arXiv:1710.01038v1.




\bibitem[Eme]{Emerton}
M. J. Emerton: \emph{$2$-adic modular forms of minimal slope}, thesis, Harvard University, 1998.



\bibitem[GM1]{GM}
F. Gouv\^{e}a and B. Mazur: \emph{Families of modular eigenforms}, Math. Comp. \textbf{58} (1992), no. 198, 793--805.

\bibitem[GM2]{GMU}
F. Gouv\^{e}a and B. Mazur: \emph{On the characteristic power series of the $U$ operator}, Ann. Inst. Fourier (Grenoble) \textbf{43} (1993), no. 2, 301--312.


\bibitem[Hat]{Ha_DMC}
S. Hattori: \emph{On the compactification of the Drinfeld modular curve of level $\Gamma_1^\Delta(\frn)$}, preprint, available at \url{http://www.comm.tcu.ac.jp/~shinh/}.



\bibitem[Jeo]{Jeong}
S. Jeong: \emph{On a question of Goss}, J. Number Theory \textbf{129} (2009), no. 8, 1912--1918. 



\bibitem[Ked]{Ked}
K. S. Kedlaya: \emph{$p$-adic differential equations}, Cambridge Studies in Advanced Mathematics \textbf{125}, Cambridge University Press, Cambridge, 2010.



\bibitem[Wan]{Wan}
D. Wan: \emph{Dimension variation of classical and $p$-adic modular forms}, Invent. Math. \textbf{133} (1998), no. 2, 449--463. 



\end{thebibliography}
\end{document}